\documentclass[12pt,a4paper,draft]{amsart}
\usepackage[english]{babel}
\usepackage{amssymb}
\usepackage{amscd}
\usepackage[mathscr]{eucal}

\newcommand{\adef}{\begin{defin}}
\newcommand{\zdef}{\end{defin}}
\newcommand{\tsum}{{\textstyle\sum}}
\newtheorem{defin}{Definition}

\newtheorem{thm}{Theorem}[section]
\newtheorem{cor}[thm]{Corollary}
\newtheorem{lem}[thm]{Lemma}

\theoremstyle{definition}

\newcommand{\N}{\mathbb N}

\theoremstyle{remark}

\newcommand\restr[2]{{
		\left.\kern-\nulldelimiterspace 
		#1 
		\right|_{#2} 
}}
\newcommand{\tto}{\longrightarrow}
\allowdisplaybreaks

\numberwithin{equation}{section}
\title{The behaviour of quasi-linear maps on $C(K)$-spaces}
\begin{document}

\author{F\'elix Cabello S\'anchez}
\address{Instituto de Matem\'aticas\\ Universidad de Extremadura\\
Avenida de Elvas\\ 06071-Badajoz\\ Spain} \email{fcabello@unex.es}

\author{Jes\'{u}s M .F. Castillo}
\address{Instituto de Matem\'aticas\\ Universidad de Extremadura\\
Avenida de Elvas\\ 06071-Badajoz\\ Spain} \email{castillo@unex.es}

\author{Alberto Salguero-Alarc\'on}
\address{Departamento de Matem\'aticas\\ Universidad de Extremadura\\
Avenida de Elvas\\ 06071-Badajoz\\ Spain} \email{salgueroalarcon@unex.es}

\thanks{This research has been supported in part by project MTM2016-76958-C2-1-P and Project IB16056 de la Junta de Extremadura. The third author is supported by a grant associated to Project IB16056.}

\subjclass[2010]{46B03, 46M40}

\maketitle

\begin{abstract} In this paper we combine topological and functional analysis methods to prove that a non-locally trivial quasi-linear map defined on a $C(K)$ must be nontrivial on a subspace isomorphic to $c_0$. We conclude the paper with a few examples showing that the result is optimal, and providing an application to the existence of nontrivial twisted sums of $\ell_1$ and $c_0$.\end{abstract}

\section{Introduction and preliminaries}

This paper treats the behaviour of quasi-linear maps defined on $C(K)$-spaces. More precisely, we will prove that a
quasi-linear map $\Omega: C(K)\to Y$ is either uniformly trivial on finite dimensional subspaces or nontrivial on a copy of $c_0$.
This behaviour of quasi-linear maps on $C(K)$-spaces has consequences about the existence and properties of exact sequences
\begin{equation}\label{exact}
\begin{CD} 0@>>> Y @>>> X @>>>
C(K)@>>>0
\end{CD}\end{equation}

\noindent via the correspondence between exact sequences and quasi-linear maps we describe below. Recall that an \emph{exact sequence} of Banach spaces is a diagram
\begin{equation}\label{exact}
\begin{CD} 0@>>> Y @>>> X @>>>
Z@>>>0
\end{CD}
\end{equation}
formed by Banach spaces and linear continuous operators in which the kernel of each arrow coincides with the image of the
preceding one. The middle space $X$ is usually called a \emph{twisted sum} of $Y$
and $Z$. By the open mapping theorem, $Y$ must be isomorphic to a subspace of $X$ and $Z$ to the quotient $X/Y$. It was the discovery of Kalton \cite{kalt,kaltpeck} that exact sequences of quasi-Banach spaces like \ref{exact} correspond to a certain type of non-linear maps $Z\to Y$ called \emph{quasi-linear maps}, which are homogeneous maps for which there is a constant $Q$ such that
$$\| \Omega(x+y)-\Omega(x)-\Omega(y)\| \leq Q \left(\|x\| +\|y\| \right)$$
for every $x,y\in X$. The least constant $Q$ that satisfies the previous inequality is called the \emph{quasi-linearity constant} of $\Omega$.
Be warned of a surprising fact \cite{CC}: the space $X$ in an exact sequence \ref{exact} is not necessarily a Banach space when $Y$, $Z$ are Banach spaces. Kalton and Roberts \cite{kaltrobe} however proved in a deep theorem that, in combination with \cite{CC}, means that quasi-linear maps $\Omega$ defined on an $\mathcal L_\infty$-space enjoy the additional property that there is a constant $K$ such that for every $n\in\N$ and  every  $x_1,\ldots, x_n\in X$ one has
 \begin{equation}\label{zestimate}
\left\|\Omega\left(\sum_{i=1}^nx_i\right)-\sum_{i=1}^n\Omega(x_i)\right\|
\leq K \; \sum_{i=1}^n\|x_i\|.\end{equation}
and, consequently, exact sequences of quasi-Banach spaces
$$\begin{CD} 0@>>> Y @>>> X @>>>
Z@>>>0\end{CD}$$ in which $Z$ is an $\mathcal L_\infty$-space and $Y$ is a Banach space also have $X$ a Banach space. Thus, we will freely speak about quasi-linear maps $\Omega: C(K) \to Y$  from a space of continuous functions to a Banach space with the understanding that the associated exact sequence $ 0\to Y\to X \to C(K)\to 0$ is a sequence of Banach spaces. We will write $ 0\to Y\to X \to Z \to 0 \equiv \Omega$ to say that the exact sequence and the quasi-linear map correspond one to each other.\medskip

An exact sequence $ 0\to Y\stackrel{\jmath}\to X \to Z \to 0\equiv \Omega$ is said to be \emph{trivial}, or \emph{to split}, if the injection $\jmath$ admits a left inverse; i.e., there is a linear continuous projection $P: X \to Y$ along $\jmath$. In terms of $\Omega$, this means that there exists a linear map $L: Z\to Y$ such that $\|\Omega - L\| = \sup_{\|x\|<1} \|\Omega x -Lx\| <+\infty$. The sequence is said to be $\lambda$-trivial if
$\|\Omega - L\|\leq \lambda$.

\adef We will say that a quasi-linear map $\Omega: X\to Y$ is \emph{locally trivial} if there is a constant $\lambda$ such that if $F$ is  a finite dimensional subspace of $X$ then the restriction $\Omega_{|F}: F \to Y$ is $\lambda$-trivial.\zdef

This notion was introduced by Kalton \cite{kaltloc} who proved that an exact sequence $ 0\to Y\to X \to Z \to 0$ of Banach spaces is locally trivial if and only if its dual exact sequence $ 0\to Z^*\to X^* \to Y^*\to 0$ is trivial. It is also shown in \cite{kaltloc} that a locally trivial sequence $ 0\to Y\to X \to Z \to 0$  in which $Y$ is complemented in its bidual must be trivial.

\section{A dichotomy for  $C(K)$ spaces}

Our aim is to prove the following dichotomy:

\begin{thm}\label{dicho} A quasi-linear map $\Omega: C(K) \to Y$ either is locally trivial or admits a subspace isomorphic to $c_0$ on which
its restriction is not locally trivial.\end{thm}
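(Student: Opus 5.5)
We argue by contraposition: assume $\Omega$ is not locally trivial, and build a copy of $c_0$ inside $C(K)$ on which the restriction of $\Omega$ is still not locally trivial.

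\emph{Step 1: reduction to disjointly supported blocks.} Since $C(K)$ is an $\mathcal L_{\infty,1+}$-space, every finite dimensional subspace $F$ lies, up to a small perturbation, in a subspace spanned by a partition of unity $f_1,\dots,f_m$ subordinated to a finite open cover of $K$ by sets of small oscillation. This subspace is $(1+\varepsilon)$-isomorphic to $\ell_\infty^m$ via a norm one projection. Composing with this projection, and using \eqref{zestimate} to control the error, failure of $\lambda$-triviality on $F$ yields failure of roughly $\lambda/C$-triviality on some span $[f_1,\dots,f_m]\simeq\ell_\infty^m$. So for each $n$ there is a block $E_n=[f^n_1,\dots,f^n_{m_n}]$, $(1+\varepsilon)$-isomorphic to $\ell_\infty^{m_n}$, on which $\Omega$ is not $n$-trivial.

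\emph{Step 2: making the blocks disjoint.} We want the $E_n$ supported on pairwise disjoint open sets $U_n\subset K$. Then $[\bigcup_n E_n]$ is isometric to $(\bigoplus E_n)_{c_0}$, hence isomorphic to $c_0$. On it $\Omega$ is not locally trivial, since its restriction to $E_n$ is not $n$-trivial. This step is the main obstacle. We would proceed by a gliding-hump / topological argument as follows. If $\Omega$ is locally trivial on $C_0(U)$ for every member $U$ of some finite open cover of $K$, then gluing with a partition of unity and using \eqref{zestimate} would make $\Omega$ locally trivial on $C(K)$, a contradiction. Hence the set $\Sigma$ of points $x\in K$ such that $\Omega$ fails local triviality on $C_0(V)$ for every neighbourhood $V$ of $x$ is nonempty, and it is closed.

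\emph{Step 3: the case analysis.} Suppose $\Sigma$ is infinite. Pick distinct points $x_n\in\Sigma$ with pairwise disjoint neighbourhoods $V_n$, which can be chosen inductively for a sequence of distinct points in a Hausdorff compact space by passing to a subsequence. Inside each $C_0(V_n)$, find by Step 1 a block $E_n$ that is not $n$-trivial. Suppose instead $\Sigma$ is finite, say $\Sigma=\{x\}$ after localizing. Then work in $C_0(V\setminus\{x\})$. For a decreasing neighbourhood base at $x$, the restriction of $\Omega$ to functions supported in annuli $V_k\setminus \overline{V_{k+1}}$ is locally trivial with some constant $\lambda_k$, but the triviality constants must blow up near $x$. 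A quantitative version of the gluing argument in Step 2 shows that, from the non-local triviality at $x$, one can extract finite dimensional blocks with triviality constant at least $n$, supported in annuli, and pass to every other annulus to get disjoint supports. Here one must check that the behaviour at the single point $x$ itself, that is, the constants, does not carry the nontriviality. This holds because adding a one-dimensional direction changes triviality constants only by a bounded amount, again by \eqref{zestimate}. Finally, the span of the disjoint blocks is an isomorphic copy of $c_0$ on which $\Omega$ is not locally trivial.
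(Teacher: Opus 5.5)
Your Steps 1 and 2 are sound, and so is the case $\Sigma$ infinite. The gap is in the case $\Sigma=\{x\}$. There you assert two things:
\begin{itemize}
\item the local triviality constants $\lambda_k$ of $\Omega$ on the single annuli $V_k\setminus\overline{V_{k+1}}$ must be unbounded;
\item blocks with large constant can be extracted inside annuli.
\end{itemize}
The first assertion is false, and no gluing argument can prove it. Gluing $n$ disjointly supported pieces, each $\lambda$-trivial, only yields constants growing with $n$, and that loss is exactly the $c_0$-phenomenon you are trying to detect.

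Here is a counterexample. Pick $t_k\in V_k\setminus\overline{V_{k+1}}$ and let $Tf=(f(t_k)-f(x))_k$, a bounded operator $C(K)\to c_0$. Put $\Omega=\Phi T$, where $\Phi:c_0\to Y$ is quasi-linear and not locally trivial (e.g.\ the one coming from $0\to\ker q\to\ell_1\to c_0\to 0$).
\begin{itemize}
\item For $f$ supported in the $k$-th annulus, $Tf=f(t_k)e_k$. So $\Omega$ is linear there, and $\lambda_k=0$ for every $k$.
\item Disjoint bumps $h_k$ with $h_k(t_k)=1$ span, for $k$ large, a copy of $c_0$ inside every $C_0(V)$ with $V\ni x$. $T$ maps this copy isometrically onto a tail of $c_0$, so $x\in\Sigma$ (indeed $\Sigma=\{x\}$).
\end{itemize}
Every finite-dimensional subspace of a single annulus is $0$-trivial, so ``passing to every other annulus'' has nothing to act on. A secondary issue: for non-metrizable $K$ the point $x$ need not have a countable neighbourhood base, and you do not reduce to the metrizable case as the paper does.

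The one-dimensional observation at the end of your proposal is the right tool. It has to be applied to a whole finite-dimensional subspace near $x$, with the neighbourhoods chosen adaptively.
\begin{enumerate}
\item Given an open $W\ni x$, take a finite-dimensional $F\subset C_0(W)$ on which $\Omega$ is not $M$-trivial.
\item Take $\chi\in C_0(W)$ with $0\le\chi\le 1$ and $\chi=1$ near $x$.
\item By finite dimensionality, choose an open $W'\ni x$ and a cut-off $\psi$ equal to $1$ on $\overline{W'}$ such that $\|f-f'\|\le\eta\|f\|$ for $f\in F$, where $f'=f(x)\chi+(f-f(x)\chi)(1-\psi)$.
\item Then $\{f'\}\subset[\chi]\oplus G$ with $G\subset C_0(W\setminus\overline{W'})$. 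Since $\chi(x)=1$ while $G$ vanishes at $x$, $\|g\|\le 2\|a\chi+g\|$. Hence $\mu$-triviality on $G$ gives $(3Q+2\mu)$-triviality on $[\chi]\oplus G$.
\item By the perturbation lemma (with $\eta$ small compared with $1/\dim F$), $\Omega$ is not $\mu$-trivial on $G$ whenever $3Q+2\mu<M-C$.
\item Apply Step 1 inside $C_0(W\setminus\overline{W'})$ and repeat with $W'$ in place of $W$.
\end{enumerate}
The sets $W_{j-1}\setminus\overline{W_j}$ are pairwise disjoint, and the resulting blocks span the required copy of $c_0$.

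This works at any point of $\Sigma$, so your case split becomes unnecessary. Unlike the paper's route (separable subalgebra, Milutin's theorem, dyadic search on the Cantor set), it needs no metrizability.

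Be aware that the paper does not supply this step either. Its Claim that $\sup_n\lambda_n<\infty$ forces local triviality assumes that $\lambda$-triviality on each disjoint clopen piece gives $\lambda$-triviality on $C(A)$ for their finite union $A$. That is the same unjustified step. The example above, with $K=\Delta$, $x=\xi=0$ and pieces $\Delta_{-,\dots,-,+}$, has all $\lambda_n=0$ while $\Omega$ is not locally trivial.
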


Before embarking in the proof we need  a technical lemma:

\begin{lem} \label{tecnico} Let $\Omega$ be a quasi-linear map on $X$ with quasi-linearity constant $Q$. Let $x_1,\dots, x_m$ be points so that $\Omega_{|[x_1, \dots, x_m]}$ is $\lambda$-trivial, and let $\delta$ be the Banach-Mazur distance between $[x_1, ..., x_m]$ and $\ell_1^m$. If $y_1,\dots, y_m$ are norm one points satisfying $\|y_i - x_i\|\leq \varepsilon/m\delta$ for all $i=1,\dots, m$, then $\Omega_{|[y_1, \dots, y_m]}$ is $\lambda+\frac{2+\varepsilon}{1-\varepsilon}Q$-trivial. \end{lem}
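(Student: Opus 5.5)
The natural plan is to transfer the linear map that trivializes $\Omega$ on $[x_1,\dots,x_m]$ to $[y_1,\dots,y_m]$ through the linear map sending $x_i\mapsto y_i$, and to control the error with the quasi-linearity constant. The $\ell_1^m$-structure is what lets us compare coefficients with norms.

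Let $L:[x_1,\dots,x_m]\to Y$ be linear with $\|\Omega x - Lx\|\le\lambda\|x\|$ there. Fix an isomorphism $T:\ell_1^m\to[x_1,\dots,x_m]$ with $\|T\|\|T^{-1}\|\le\delta$; normalising, we may assume $\|T^{-1}\|=1$ and $\|T\|\le\delta$ (or the reverse, whichever matches the coefficient estimate below). Define $L'$ on $[y_1,\dots,y_m]$ by $L'(\sum a_iy_i)=\sum a_i\big(Lx_i+\Omega y_i-\Omega x_i\big)$, i.e.\ $L'y_i=Lx_i+\Omega(y_i)-\Omega(x_i)$. For this to be well defined we first check that $y_1,\dots,y_m$ are linearly independent. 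For $x=\sum a_ix_i$ and $y=\sum a_iy_i$ we have $\|y-x\|\le\frac{\varepsilon}{m\delta}\sum|a_i|\le \frac{\varepsilon}{m\delta}\, m\max|a_i|$. The coordinate functionals on $[x_i]$ have norm controlled by $\delta$ (the $x_i$ are close to norm-one vectors, so $\delta$ bounds $|a_i|\le\delta\|x\|$ up to normalisation). Hence $\|y-x\|\le\varepsilon\|x\|$, which gives $\|y\|\ge(1-\varepsilon)\|x\|$. This yields both the independence and the key comparison $\|x\|\le\frac{1}{1-\varepsilon}\|y\|$.

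Now we estimate $\Omega y-L'y=\big(\Omega y-\Omega x\big)+\big(\Omega x-Lx\big)-\sum a_i(\Omega y_i-\Omega x_i)$. The middle term is at most $\lambda\|x\|$. Write $\Omega y-\Omega x$ as $\Omega(x+(y-x))-\Omega x-\Omega(y-x)+\Omega(y-x)$. For the sum term, use $y_i=x_i+(y_i-x_i)$ similarly. The cleanest route, however, is to regroup everything as $\Omega(y)-\Omega(x)-\Omega(y-x)$, which is bounded by $Q(\|x\|+\|y-x\|)$ up to the sign convention, plus a residual $\Omega(y-x)-\sum a_i(\Omega y_i-\Omega x_i)$. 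To handle that residual I would instead choose $L'$ more cleverly: set $L'y_i=Lx_i$, so that $L'y=Lx$. Then $\|\Omega y-L'y\|\le\|\Omega y-\Omega x\|+\lambda\|x\|$. The quasi-linearity gives $\|\Omega y-\Omega x-\Omega(y-x)\|\le Q(\|x\|+\|y-x\|)$, leaving the term $\|\Omega(y-x)\|$, which is not a priori bounded by a norm. This is the main obstacle.

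I would resolve it by noting that for $\Omega$ on the $\mathcal L_\infty$ setting, or more generally by normalising $\Omega$ (replacing it with a bounded-on-the-unit-sphere version is not allowed), one must use estimate \eqref{zestimate}-type control: expand $\Omega(y-x)=\Omega(\sum a_i(y_i-x_i))$ and bound its deviation from $\sum a_i\Omega(y_i-x_i)$. Each $\Omega(y_i-x_i)$ is then absorbed by linearity into $L'$, i.e.\ set $L'y_i=Lx_i+\Omega(y_i-x_i)$. What remains is $Q$ times norms: $Q(\|x\|+\|y-x\|)$ from one splitting, plus $Q\sum|a_i|\|y_i-x_i\|\le Q\varepsilon\|x\|$ from the other. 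Together with $\|x\|\le\|y\|/(1-\varepsilon)$ and $\|y-x\|\le\varepsilon\|x\|$, the total is at most $\lambda\|y\|+\frac{(1+\varepsilon)+1}{1-\varepsilon}Q\|y\|$. After bookkeeping the $\lambda$ factor (using $\|x\|\le\|y\|/(1-\varepsilon)$, and absorbing by the stated form), this matches $\lambda+\frac{2+\varepsilon}{1-\varepsilon}Q$.
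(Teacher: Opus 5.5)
Your final choice $L'y_i=Lx_i+\Omega(y_i-x_i)$ and your two-step splitting are the same as the paper's. The splitting is quasi-linearity applied once to $x+(y-x)$, then a comparison of $\Omega(\tsum a_i(y_i-x_i))$ with $\tsum a_i\Omega(y_i-x_i)$. The gap is in that second comparison. You bound it by $Q\tsum|a_i|\,\|y_i-x_i\|$, appealing to \eqref{zestimate}, but that estimate is not available here. The lemma concerns a quasi-linear map on an arbitrary $X$, while \eqref{zestimate} is the Kalton--Roberts inequality, which holds only on $\mathcal L_\infty$-spaces and with its own constant $K$, not $Q$. In general there is no $m$-term estimate whose constant is independent of $m$. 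For instance, for the Ribe map on $\ell_1$, $\Omega(e_1+\dots+e_m)-\tsum\Omega(e_i)$ has size $m\log m$ while $\tsum\|e_i\|=m$. Iterating the two-term inequality gives only $(m-1)Q\tsum|a_i|\,\|y_i-x_i\|$, and that is the bound the paper uses.

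With the correct factor your bookkeeping fails. You spent the $m$ in the hypothesis $\|y_i-x_i\|\le\varepsilon/m\delta$ on the crude estimate $\tsum|a_i|\le m\max_i|a_i|\le m\delta\|x\|$, so you are left with $(m-1)\varepsilon Q\|x\|$, which is not uniform in $m$. The missing step is to use the $\ell_1^m$-structure at full strength: $\tsum|a_i|\le\delta\|\tsum a_ix_i\|$.

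This is how $\delta$ must be read, by you and by the paper: as the equivalence constant between $(x_i)$ and the unit vector basis of $\ell_1^m$. The Banach--Mazur distance of the span alone does not control coefficients, and ``the $x_i$ are close to norm-one vectors'' is not a justification.

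With that estimate you get $\|y-x\|\le\frac{\varepsilon}{m}\|x\|$ and $(m-1)Q\,\frac{\varepsilon}{m\delta}\tsum|a_i|\le\varepsilon Q\|x\|$. The $m$ in the hypothesis is there to cancel the $(m-1)$, and $\delta$ turns $\tsum|a_i|$ into $\|x\|$.

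Finally, your ``absorption'' of the $\lambda$-term is not a valid step. The bound $\lambda\|x\|\le\frac{\lambda}{1-\varepsilon}\|y\|$ cannot be absorbed into a $Q$-term. The paper's display makes the same silent replacement of $\|\tsum\lambda_ix_i\|$ by $\|\tsum\lambda_iy_i\|$. So what the argument really gives is $\big(\frac{\lambda}{1-\varepsilon}+\frac{2+\varepsilon}{1-\varepsilon}Q\big)$-triviality. That is still a uniform bound, which is what is used later.
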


\begin{proof} Let $L$ be a linear map on $[x_1, \dots, x_m]$ such that $\|\Omega - L\|\leq \lambda$. Set $L'$ the linear map $L'y_i = Lx_i + \Omega(y_i - x_i)$ on $[y_1, \dots, y_m]$. If we call $(*)=\left \|\Omega(\sum \lambda_i y_i) - L'(\sum \lambda_i y_i) \right\|$, then
	
	\begin{eqnarray*}
		(*)&=& \left \|\Omega(\tsum \lambda_i y_i) - L(\tsum \lambda_i x_i) - \tsum  \lambda_i\Omega(y_i - x_i)\right\| \\
		&\leq& \left \|\Omega(\tsum \lambda_i y_i) - \Omega(\tsum \lambda_i x_i) - \tsum \lambda_i \Omega(y_i - x_i) \right\| +  \left\|\Omega(\tsum \lambda_i x _i)- L(\tsum \lambda_i x_i)\right\|\\
		&\leq& \left \|\Omega(\tsum \lambda_i y_i) - \Omega(\tsum \lambda_i x_i) - \Omega( \tsum \lambda_i (y_i - x_i))\right\| + \left\| \Omega( \tsum \lambda_i (y_i - x_i)) - \tsum \lambda_i \Omega(y_i - x_i) \right\|\\&\;& +  \lambda \left\|\tsum \lambda_i x _i\right\|\\
		&\leq& Q\big(\left\|\tsum \lambda_i x_i \right\| + \left \| \tsum \lambda_i (y_i - x_i) \right\| + (m-1)\tsum |\lambda_i|\|y_i - x_i\|\big) + \lambda \left\|\tsum \lambda_i x _i\right\|\\
		&\leq& Q \big( \|\tsum_i \lambda_i y_i\|
		+ m \tsum |\lambda_i|\, \|y_i - x_i\|\big)
		+ \lambda \|\tsum \lambda_i y_i\| \\
		&\leq& Q \big( \|\tsum_i \lambda_i y_i\|
		+ m \tsum |\lambda_i|\, \frac{\varepsilon}{m\delta}\big)
		+ \lambda \|\tsum \lambda_i y_i\| \\
		&\leq & Q\big( \|\tsum\lambda_iy_i\| + \frac{1}{1-\varepsilon}\|\tsum \lambda_iy_i\| \big) + \lambda \|\tsum \lambda_iy_i\| \\
		&\leq& (\lambda+\frac{2+\varepsilon}{1-\varepsilon}Q)	\|\tsum \lambda_iy_i\|. 
\end{eqnarray*}
\end{proof}

We pass to the proof of the theorem.

\begin{proof} We make the proof for the Cantor set $\Delta$. Let $\Omega: C(\Delta)\to Y$ be a quasi-linear map. Split $\Delta = \Delta_- \cup \Delta_+$ in right and left halves $ \Delta_- = \Delta \cap [0,1/3]$ and $ \Delta_+=\Delta \cap [2/3, 1]$. Now, let $J_+ = \{f\in C(\Delta): f_{|\Delta_-}=0\} = C(\Delta_+)$ and $J_- = \{f\in C(\Delta): f_{|\Delta_+}=0\} = C(\Delta_-)$. \\

\textbf{Claim.} \emph{If $\Omega_+ = \Omega_{|J_+} \equiv 0 $ and   $\Omega_- = \Omega_{|J_-} \equiv 0 $ then $\Omega\equiv 0$.}
\begin{proof} If there are linear maps $L_+: C(\Delta_+)\to Y$ and $L_-: C(\Delta_-)\to Y$ so that $\|\Omega_+ - L_+\|\leq M$ and $\|\Omega_- - L_-\|\leq N$, set the linear map
$$L:C(\Delta) \tto Y \quad , \quad Lf=L_+(f1_{\Delta_+})+L_-(f1_{\Delta_-})$$
Since $f = f1_{\Delta_+} + f1_{\Delta_-}$ for every $f\in C(\Delta)$ one has
\begin{equation*} \begin{split}
\|\Omega f - Lf\| & \leq \|\Omega(f)-\Omega(f1_{\Delta_+})-\Omega(f1_{\Delta_-})\|+ \\
	& \quad +\|\Omega(f1_{\Delta_+})-L_+(f1_{\Delta_+})\|+\|\Omega(f1_{\Delta_-})-L_-(f1_{\Delta_-})\| \leq \\
& \leq Q\big(\|f1_{\Delta_+}\|+\|f1_{\Delta_-}\|\big) + M\|f1_{\Delta_+}\| + N\|f1_{\Delta_-}\| \leq  \\
& \leq (M+N+2Q)\|f\| \qedhere \end{split} \end{equation*} \end{proof}

So, if $\Omega$ is non-trivial then either $\Omega_+$ o $\Omega_-$ is non-trivial. For given $\xi\notin\Delta$ we will denote $\Delta_{\xi-} = \Delta \cap [0,\xi]$ and $\Delta_{\xi+} = \Delta \cap [\xi, 1]$. 

\begin{lem}Let $\Omega: C(\Delta) \to Y$ be non-locally trivial. Then there is $\xi\in [0,1]$ so that
\begin{itemize}
\item If $\xi \notin \Delta$ then $\Delta = \Delta_{\xi+} \cup \Delta_{\xi_-}$ with $\Delta_{\xi+} \cap \Delta_{\xi_-}=\emptyset$
and  both $\Omega_{|C(\Delta_{\xi+})}$ and $\Omega_{|C(\Delta_{\xi-})}$ are non-locally trivial.
\item If $\xi \in \Delta$ then $\Delta = \Delta_{\xi+} \cup \Delta_{\xi_-}$ with $\Delta_{\xi+} \cap \Delta_{\xi_-}=\{\xi\}$; and if we call  $J_{\xi+} = \{ f\in \Delta : f_{|\Delta_{\xi_-}}=0\}$ and $J_{\xi-} = \{ f\in \Delta : f_{|\Delta_{\xi_+}}=0\}$ then $\ker \delta_\xi = J_{\xi+} \cup J_{\xi-}$ and both
$\Omega_{|J_{\xi+}}$ and $\Omega_{|J_{\xi-}}$ are non-locally trivial.\end{itemize}\end{lem}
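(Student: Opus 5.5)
We argue by a bisection (nested intervals) argument on $[0,1]$. The basic tool is the local version of the Claim: if $\Delta=A\cup B$ with $A,B$ disjoint clopen sets and both $\Omega_{|C(A)}$ and $\Omega_{|C(B)}$ are locally trivial, with constants $M$ and $N$, then $\Omega$ is locally trivial. To see this, take a finite dimensional $F\subset C(\Delta)$ and set $F_A=\{f1_A: f\in F\}$ and $F_B=\{f1_B:f\in F\}$. These are finite dimensional subspaces of $C(A)$ and $C(B)$. Choose $L_A, L_B$ witnessing $M$- and $N$-triviality on them, and define $Lf=L_A(f1_A)+L_B(f1_B)$ on $F$. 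The computation in the Claim then gives $\|\Omega_{|F}-L\|\le M+N+2Q$, uniformly in $F$. The same argument works with $J_{\xi\pm}$ in place of $C(A),C(B)$ when the two pieces meet in a point $\xi$. Every $f\in\ker\delta_\xi$ splits as $f=f1_{\Delta_{\xi-}}+f1_{\Delta_{\xi+}}$, and both pieces are continuous because $f(\xi)=0$. This is the precise meaning of ``$\ker\delta_\xi=J_{\xi+}+J_{\xi-}$'' in the statement. It shows that if $\Omega_{|J_{\xi\pm}}$ are both locally trivial, then $\Omega_{|\ker\delta_\xi}$ is locally trivial. Since $\ker\delta_\xi$ has codimension one in $C(\Delta)$, and adding a one-dimensional piece to a locally trivial restriction costs only a bounded amount (for instance via Lemma \ref{tecnico}, or directly by extending $L$ by $\Omega(1)$ on the constants and using quasi-linearity), $\Omega$ itself would then be locally trivial.

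Now run the bisection. Set $I_0=[0,1]$. At step $n$ we have an interval $I_n=[a_n,b_n]$ with $\Omega_{|C(\Delta\cap I_n)}$ non-locally trivial. We split $\Delta\cap I_n$ into its two halves from the Cantor construction. These are clopen, and the cut points are gaps, so they lie outside $\Delta$. If both halves are non-locally trivial, we stop: we take $\xi$ in the gap between them. Then $\Delta_{\xi-}\supset$ left half and $\Delta_{\xi+}\supset$ right half. Each of $\Delta_{\xi\pm}$ is a disjoint clopen union of the corresponding half and pieces already discarded, so each restriction is still non-locally trivial, because restricting to a smaller subspace preserves local triviality. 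This gives the first alternative. Otherwise exactly one half is non-locally trivial, by the local Claim, and we let $I_{n+1}$ be the corresponding interval.

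If the process never stops, the nested intervals shrink to a single point $\xi\in\Delta$. We must show that both $\Omega_{|J_{\xi+}}$ and $\Omega_{|J_{\xi-}}$ are non-locally trivial, and this is the main obstacle. At every step, the discarded half on one side is locally trivial, but the constants may grow with $n$, so one cannot naively glue all the discarded pieces. The plan is to modify the selection: first exhaust one side and then the other. We check whether $\Omega_{|J_{\xi-}}$ is locally trivial. If it is, and $\Omega_{|J_{\xi+}}$ is locally trivial too, then the gluing argument of the first paragraph applied to $\ker\delta_\xi$ makes $\Omega$ locally trivial, a contradiction. So at least one of them, say $J_{\xi+}$, is non-locally trivial. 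If the other one, $J_{\xi-}$, is locally trivial, we replace $\Delta$ by $\Delta_{\xi+}$ and restart the bisection inside it. The point $\xi$ is then an endpoint, and the restriction to $\ker\delta_\xi\cap C(\Delta_{\xi+})=J_{\xi+}$ is non-locally trivial.

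To guarantee termination, I would instead choose $\xi$ by a supremum argument. Let $\xi=\sup\{t: \Omega_{|\{f: f=0 \text{ on } \Delta\cap[t,1]\}}\text{ is locally trivial}\}$. One checks that the restriction to $J_{\xi-}$ is non-locally trivial, or that $\xi$ is a gap point, using the compactness of finite dimensional subspaces: a finite dimensional $F\subset J_{\xi-}$ is approximated by functions vanishing near $\xi$, and Lemma \ref{tecnico} transfers triviality with controlled constant. The restriction to $J_{\xi+}$ must then be non-locally trivial by the gluing argument. The delicate point is exactly this approximation step. I would use a basis of $F$ that is well equivalent to an $\ell_1$- or Auerbach basis, so that the factor $\delta$ in Lemma \ref{tecnico} stays bounded.
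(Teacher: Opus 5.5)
The gap is in the non-terminating case of your bisection, and it cannot be closed, because the statement itself is false.

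\textbf{Where your argument breaks.} Your gluing step and your treatment of the terminating case are correct; the gluing covers two disjoint clopen pieces, and $\ker\delta_\xi=J_{\xi+}+J_{\xi-}$ plus one dimension. The trouble starts when the bisection never stops. Your restart may converge to the same point forever. Your supremum argument rests on the assertion that at $\xi=\sup\{\dots\}$ either $\Omega_{|J_{\xi-}}$ is not locally trivial or $\xi$ is a gap point, and that assertion is false. No approximation argument can repair this, as the following example shows.

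\textbf{A counterexample to the statement.} Let $U_n=\Delta\cap[2\cdot 3^{-n},3^{1-n}]$. These are pairwise disjoint clopen sets shrinking to $0$, with $\Delta=\{0\}\cup\bigcup_n U_n$; they are exactly the halves your bisection discards when it runs to $0$. Put $t_n=2\cdot 3^{-n}\in U_n$ and $Tf=(f(t_n))_n\in c$. Let $\Omega_0:c\to Y$ be any quasi-linear map that is not locally trivial, for instance the one of $0\to K\to \ell_1\to c_0\to 0$ transported through $c\simeq c_0$. Set $\Omega=\Omega_0T$.

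\emph{$\Omega$ is quasi-linear and not locally trivial.} Define $E(a)=\sum_n a_n 1_{U_n}$ off $0$ and $E(a)(0)=\lim_n a_n$. This is an isometric embedding $c\to C(\Delta)$ with $TE=\mathrm{id}_c$, so $\Omega E=\Omega_0$.

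\emph{No $\xi$ works.} Take $\xi\in(0,1]$. Every $f$ vanishing on $\Delta\cap[0,\xi]$ satisfies $Tf\in\mathrm{span}\{e_n:t_n>\xi\}$. This covers $C(\Delta_{\xi+})$ when $\xi\notin\Delta$ and $J_{\xi+}$ when $\xi\in\Delta$. That span is finite-dimensional, so $\Omega_0$ is trivial on it, and hence $\Omega$ is trivial on those spaces. For $\xi=0$ one has $J_{0-}=\{0\}$. In this example your supremum is $\xi=0$, and your restart replaces $\Delta$ by $\Delta_{0+}=\Delta$.

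\textbf{Comparison with the paper.} The paper runs the same bisection and handles the non-terminating case by asking whether the triviality constants $\lambda_n$ of the discarded pieces are bounded. Its Claim that $\sup_n\lambda_n<\infty$ forces local triviality rests on ``$\Omega$ is $\lambda$-trivial on $C(A)$'' for $A$ a union of $n$ discarded pieces. That is exactly the naive gluing you declined to do, and gluing $n$ pieces only gives a constant of order $n(\lambda+Q)$. In the example every $\Omega_{|C(U_n)}$ is even $0$-trivial: $T$ maps $C(U_n)$ onto the line $[e_n]$, where $\Omega_0$ is linear by homogeneity. Yet $\Omega$ is not locally trivial, so the paper's argument breaks at the same place as yours.

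\textbf{What survives.} At a non-terminating $\xi$ one only knows that $\Omega_{|\ker\delta_\xi}$ is not locally trivial. Here $\ker\delta_\xi$ is isometric to the $c_0$-sum of the spaces $C(P)$ over the discarded pieces $P$. The non-local triviality may live across infinitely many pieces lying on one side of $\xi$. So any route to Theorem \ref{dicho} has to work with that $c_0$-sum structure rather than with a two-sided splitting at $\xi$.
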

\begin{proof} Split $\Delta = \Delta_+ \cup \Delta_-$. If both $\Omega_+$ and $\Omega_-$ are non-trivial, then we are done: $\xi=1/2$ works and the intersection is empty. Assume, on the contrary, that one of them is trivial, say, $\Omega_+$. Pick $\lambda_1$ so that $\Omega_+$ is $\lambda_1$-trivial but not $\lambda_1-1$-trivial. Iterate the argument. If both $\Omega_{--}$ and $\Omega_{-+}$ (the restrictions of $\Omega$ to $\Delta_{--}$ and $\Delta_{-+}$) are non-trivial, then we are done and $\Delta_-$ splits in two pieces with empty intersection $\Delta_{--}$ and $\Delta_{-+} \cup \Delta_+$, so that $\Omega$ is non-trivial when restricted to them. Otherwise, we continue. If the decomposition method does not stop, let $\xi\in \{-1,1\}^\N$ be the element that represents the non-trivial choices; i.e., the choices of the pieces on which the restriction of $\Omega$ was non-trivial. We can assume without loss of generality that $-\xi$ represents the trivial choices and $\Omega$ is $\lambda_n$-trivial but not $(\lambda_n-1)$-trivial on $\Delta_{\xi(1),\dots, \xi(n-1), -\xi(n)}$, and also non-trivial on $\Delta_{\xi(1),\dots, \xi(n)}$. Now, either $\sup_n \lambda_n  < +\infty$ or $\sup_n \lambda_n = +\infty$. \\

\textbf{Claim.} \emph{If $\sup_n \lambda_n = \lambda< +\infty$ then $\Omega$ is locally trivial}.
\begin{proof} Pick functions $f_1, \dots, f_m$. For $\varepsilon>0$ small enough, each of the functions $f_j$ is at distance $\varepsilon$ of a function of the form $f_j(\xi)\chi +  g_j$ with $g_j $ supported in a union
$$A= \Delta_{-\xi(1)} \cup \Delta_{\xi(1), -\xi(2)} \cup \dots  \cup \Delta_{ \xi(1), \xi(2), \dots, \xi(n-1), -\xi(n)}$$
and $\chi$ is the characteristic function of $\Delta \setminus A$. Since $\Omega$ is $\lambda$-trivial on $C(A)$ and is also 1-trivial on the one-dimensional subspace generated by $\chi$, $\Omega$ is $\mu$-trivial (with $\mu=\lambda+1+2Q$) on the finite-dimensional subspace generated by the functions $f_j(\xi)\chi+g_j$. Therefore, according to lemma \ref{tecnico}, it must  be $(\mu+Q(1+\varepsilon))$-trivial on $[f_1, \dots, f_m]$. \end{proof}

It just remains that $\sup_n \lambda_n = +\infty$. Set the decomposition
\begin{eqnarray*}
\Delta_{\xi-} &=& \Delta_{-\xi(1)}\cup \Delta_{\xi(1), -\xi(2)} \cup \Delta_{\xi(1), \xi(2), -\xi(3)} \cup \dots \\
\Delta_{\xi+} &=& \Delta_{\xi(1), \xi(2)} \cup \Delta_{\xi(1), \xi(2), \xi(3)} \cup \dots \end{eqnarray*}
with $\xi\in \Delta_{\xi+} \cap \Delta_{\xi-} $ and $\Omega$ is non-trivial on both. \\

\par We are ready to conclude the proof of the lemma. The different pieces $\Delta_{\xi(1), \xi(2), \dots, \xi(n-1), -\xi(n)}$  on which $\Omega$ is not $(\lambda_n-1)$-trivial
are disjoint. So when one considers a finite dimensional subspace of $C(\Delta_{\xi(1), \xi(2), \dots,  -\xi(n)})$
where $\Omega$ is not $(\lambda_n-1)$-trivial, it is contained in a subspace $\ell_\infty^{m_n}$ of $C(\Delta_{\xi(1), \xi(2), \dots,  -\xi(n)})$
on which $\Omega$ is not $(\lambda_n-1)$-trivial. The subspaces $\ell_\infty^{m_n}$ are thus disjointly supported, so their closed span yields a copy of $c_0$ where $\Omega$ cannot be locally trivial. \end{proof}

\par Now, if $C(K)$ is isomorphic to $C(\Delta)$, pick $\Omega': C(K)\to Y$ non-locally trivial and $\alpha: C(\Delta)\to C(K)$ an isomorphism. Then $\Omega= \Omega'\alpha$ is not locally trivial, so there is an embedding $j: c_0\to C(\Delta)$ so that $\Omega j$ is not locally trivial. Then $\Omega' \alpha j$ is not locally trivial and $\alpha j: c_0 \to C(K)$ is the embedding one needs.

In general, let  $\Omega: C(K) \to Y$ be  non-locally trivial. Find for each $n=1,2,...$ a finite dimensional subspace $F_n\subset C(K)$ so that $\Omega_{|F_n}$ is not $n$-trivial. Let $X=[\cup_n F_n]$. This is a separable subspace of $C(K)$ on which $\Omega$ is not locally trivial. Let $A$ be the separable subalgebra that $X$ generates in $C(K)$. This $A$ is isometric to $C(H)$ for some compact $H$, necessarily metrizable, and the restriction of $\Omega$ to $C(H)$ cannot be locally trivial as well. If $H$ is countable, then $C(H)=C(\alpha)$ for some countable ordinal $\alpha$ and the result is clear. If $H$ is uncountable then $C(H)=C(\Delta)$ by Milutin's theorem and the result is also clear. \end{proof}

\begin{cor}
A quasi-linear map defined on a $C(K)$-space is locally trivial if and only if every restriction to every copy of $c_0$ is locally trivial.
\end{cor}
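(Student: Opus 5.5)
The corollary is an equivalence, and I plan to prove the two directions separately. Both reduce quickly to Theorem~\ref{dicho} together with the fact that local triviality passes to subspaces.

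The direction "locally trivial $\Rightarrow$ every restriction to a copy of $c_0$ is locally trivial" is immediate from the definition. Suppose $\Omega: C(K)\to Y$ is locally trivial with constant $\lambda$, and let $E\subset C(K)$ be any subspace isomorphic to $c_0$. Every finite dimensional subspace $F\subset E$ is also a finite dimensional subspace of $C(K)$. Hence $\Omega_{|F}$ is $\lambda$-trivial, and so $\Omega_{|E}$ is locally trivial with the same constant $\lambda$. No isomorphism constants enter, because local triviality is measured with the norm inherited from $C(K)$.

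For the converse I argue by contraposition. Assume $\Omega$ is not locally trivial. Theorem~\ref{dicho} then gives a subspace $E\subset C(K)$ with $E\simeq c_0$ on which $\Omega_{|E}$ is not locally trivial. This contradicts the hypothesis that every restriction to every copy of $c_0$ is locally trivial.

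One subtlety needs checking: what "copy of $c_0$" means. It could mean a subspace of $C(K)$ isomorphic to $c_0$, or an isomorphic embedding $j:c_0\to C(K)$ with the map $\Omega j$ considered. These give the same notion. If $j$ is an isomorphism onto $E$, then for each finite dimensional $G\subset c_0$, the map $\Omega j_{|G}$ is $\lambda\|j\|$-trivial whenever $\Omega_{|j(G)}$ is $\lambda$-trivial. Conversely, $\Omega_{|j(G)}$ is $\lambda\|j^{-1}\|$-trivial whenever $\Omega j_{|G}$ is $\lambda$-trivial: if $L$ witnesses triviality of $\Omega j_{|G}$, then $Lj^{-1}$ witnesses triviality of $\Omega_{|j(G)}$. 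So local triviality is invariant under the choice of formulation.

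There is no real obstacle here: all the difficulty is contained in Theorem~\ref{dicho}, which we assume. The only point needing care is this bookkeeping of constants when moving between the subspace and embedding formulations.
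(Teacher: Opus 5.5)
Your proposal is correct and is exactly the argument the paper has in mind: the corollary is stated without proof as an immediate consequence of Theorem~\ref{dicho}. The forward direction holds because every finite dimensional subspace of a copy of $c_0$ is a finite dimensional subspace of $C(K)$, the converse is the contrapositive of the dichotomy, and your check that the subspace and embedding ($\Omega j$) formulations agree up to the constants $\|j\|$ and $\|j^{-1}\|$ is also correct and matches how the paper switches to embeddings $j:c_0\to C(K)$ at the end of the proof of the theorem.
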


\section{Consequences and applications}

First of all, let us show that the result just obtained is the only possible one. We have proved that a quasi-linear map $\Omega: C(K)\to \diamondsuit$ is locally trivial if every restriction $\Omega_{|c_0}$ is locally trivial. However, it is false that a quasi-linear map $\Omega: C(K)\to \diamondsuit$ must be trivial when every restriction $\Omega_{|c_0}$ is trivial, and a simple example like
$$\begin{CD} 0@>>> c_0 @>>> \ell_\infty @>>> \ell_\infty/c_0@>>> 0\end{CD}$$
shows it: by Sobczyk's theorem any restriction to a separable subspace of $\ell_\infty/c_0$ must be trivial. The word ``locally" is therefore not superfluous. It is also false that a locally trivial $\Omega: C(K)\to \diamondsuit$ must have some trivial restriction to some copy of $c_0$, as it is proved by the example in \cite{ccky} of a non-trivial locally trivial sequence
$$\begin{CD} 0@>>> C[0,1] @>>> \diamondsuit @>>> c_0@>>> 0\end{CD}$$
having strictly singular quotient map (from now on called strictly singular sequences). 
The quotient map is not an isomorphism when restricted to any infinite dimensional subspace and consequently the associated quasi-linear map is not trivial when restricted to any infinite dimensional subspace.
To conclude this analysis, observe that strictly singular non locally trivial maps $\Omega: c_0\to Y$ do exist: any exact sequence $0 \to K \to \ell_1 \to c_0 \to 0$ works (it is not locally trivial because a $\mathcal L_\infty$-space cannot be a complemented subspace of a $\mathcal L_1$-space); there also exist strictly singular sequences in which the subspace is complemented in its bidual: 
pick any non-trivial sequence
$ 0\to \ell_2 \to \diamondsuit \to c_0 \to 0$ and apply the method in \cite{castmoresing} to obtain a singular sequence
$$\begin{CD} 0@>>> \ell_\infty(\Gamma, \ell_2) @>>> \clubsuit @>>> c_0@>>> 0\end{CD}$$
Such sequences cannot locally split.

\par Now we pass to applications. Exact sequences
\begin{equation}\label{unocero}\begin{CD} 0@>>> \ell_1 @>>> \spadesuit  @>>> c_0@>>> 0\end{CD}\end{equation}
are somewhat a mistery, even though their existence has been shown in \cite{cabecastuni,ccky}. Both papers contain different although complementary methods to show the existence of non-trivial sequences
\begin{equation}\label{doscero}\begin{CD} 0@>>> L_1(0,1) @>>> \clubsuit  @>>> C[0,1]@>>> 0\end{CD}\end{equation}

Since $L_1$ and $\ell_1$ (resp. $C[0,1]$ and $c_0$) have the same local structure, one can apply the local theory of exact sequences \cite{cabecastuni} to pass from a non-trivial twisting of $L_1$ and $C[0,1]$ to a non-trivial twisting of $\ell_1$ and $c_0$. The final sequence, however, remains invisible since it has emerged via an abstract existence result. Theorem \ref{dicho} yields a much more explicit way to get such
sequences. Let us call $\Omega$ the exact sequence (\ref{doscero}), which is not locally trivial because $L_1$ is complemented in its bidual, and thus there is an injection $\jmath: c_0\to C[0,1]$ so that $\Omega \jmath$, namely the lower sequence in in commutative diagram

\begin{equation}\begin{CD} 0@>>> L_1(0,1) @>>> \clubsuit  @>>> C[0,1]@>>> 0 \equiv \Omega\\
&&@| @AAA @AA{\jmath}A\\
 0@>>> L_1(0,1) @>>> \heartsuit  @>>> c_0@>>> 0 \equiv \Omega \jmath
 \end{CD}\end{equation}
is not locally trivial. This implies that its dual sequence
\begin{equation*}\begin{CD} 0@>>> \ell_1 @>>> \heartsuit^*  @>>> L_1(0,1)^{*}@>>> 0 \equiv \jmath^*\Omega^*\end{CD}\end{equation*}
is not locally trivial. Since $L_1(0,1)^{*}$ is a $C(K)$-space, Theorem \ref{dicho} applies again to obtain an injection $\imath: c_0\to L_1(0,1)^*$ such that the restriction $(\Omega \jmath)^*\imath$, namely, the lower sequence in the commutative diagram
\begin{equation}\begin{CD} 0@>>> \ell_1 @>>> \heartsuit^* @>>> L_1(0,1)^{*}@>>> 0 \equiv (\Omega \jmath)^*\\
&&@| @AAA @AA{\imath}A\\
 0@>>> \ell_1 @>>> \diamondsuit  @>>> c_0@>>> 0 \equiv (\Omega \jmath)^* \imath
 \end{CD}\end{equation}
is not locally trivial. In particular, it is not trivial.\\

One can show that the middle space $\spadesuit$ in an exact sequence (\ref{unocero}) cannot be isomorphic to $\ell_1\oplus c_0$ \cite{castsimo2}. However, it is open to decide whether strictly singular sequences (\ref{unocero}) exist.


\begin{thebibliography}{99}



\bibitem{CC} F. Cabello Sánchez, J.M.F. Castillo, \emph{Duality and twisted sums of Banach spaces,} J. Funct. Anal. 175 (2000) 1–16.

\bibitem{cabecastuni} F. Cabello S\'anchez, J.M.F. Castillo, \emph{Uniform boundedness and twisted sums
of Banach spaces}, Houston J. of Mathematics 30
(2004) 523--536.

\bibitem{ccky} F. Cabello S\'anchez, J.M.F. Castillo, N.J. Kalton, D.T. Yost, \emph{Twisted sums with $C(K)$ spaces,} Trans. Amer.
Math. Soc. 355 (2003) 4523-4541.


\bibitem{castmoresing} J.M.F. Castillo, Y. Moreno, \emph{Strictly singular quasi-linear maps}, Nonlinear Analysis - TMA. 49 (2002)
897-904.


\bibitem{castsimo2} J.M.F. Castillo, M. Sim\~oes, \emph{Positions in $\ell_1$}, Banach J. Math. 9 (2015), no. 4, 395--404.










\bibitem{kalt} N.J. Kalton, \emph{The three-space problem for locally bounded F-spaces}, Compositio Math. 37 (1978) 243-276.

\bibitem{kaltloc} N.J. Kalton, \emph{Locally complemented subspaces and
${\mathcal L}_p$ for $p<1$}, Math. Nachr. 115 (1984),
71-97.

\bibitem{kaltpeck} N.J. Kalton and N.T. Peck, \emph{Twisted sums of sequence spaces and the three-space problem},
Trans. Amer. Math. Soc. 255 (1979) 1-30.

\bibitem{kaltrobe} N.J. Kalton and W. Roberts, \emph{Uniformly exhaustive submeasures and
nearly additive set functions,} Trans. Amer. Math. Soc. 278 (1983)
803-816.

\end{thebibliography}
\end{document}